\renewcommand{\cf}{\emph{cf.}}
\DeclareMathOperator{\Gr}{Gr}
\begin{document}
	\date{November 08, 2016}
	\title[Local affine selections of convex multifunctions]{Local affine selections of convex multifunctions}
	
	\author[S. Wąsowicz]{Szymon Wąsowicz}
	\address{Department of Mathematics, University of Bielsko-Biała, Willowa~2, 43-309 Bielsko-Biała, Poland}
	\email{swasowicz@ath.bielsko.pl}

\begin{abstract}
 It is well known that not every convex multifunction admits an affine selection. One could ask whether there exists at least local affine selection. The answer is positive in the finite-dimensional case. The main part of this note consists of two examples of non-existence of local affine selections of convex multifunctions defined on certain infinite-dimensional Banach spaces.
\end{abstract}

\subjclass[2010]{54C60, 54C65} 

\keywords{multifunction, selection, convexity, extension of a~function, \v{C}ech--Stone compactification}
\maketitle

\section{Introduction}
Given two non-void sets $X$ and $Y$, a map $F\colon X\to 2^Y$ is called a~\emph{multifunction} or a~\emph{set-valued function}. A (single-valued) function $f\colon X\to Y$ is a~\emph{selection} for~$F$, if $f(x)\in F(x)$ for all $x\in X$. There is a plethora of results concerning selections of various kinds, with the Micheal Selection Principle concerning lower semi-continuous selections and the Kuratowski--Ryll-Nardzewski Selection Principle concerning measurable selections as probably the most prominent ones. More recent results connected with Michael Selection Principle were established by Zippin~\cite{Zip90}.\medskip

When $X$ and $Y$ carry a vector-space structure, it is natural to study affine selections or, at least local affine selections for multifunctions $F\colon X\to 2^Y$, which are the objective of this note. This topic was investigated (among others) by A.~Lazar~\cite{Laz68}, A.~Smajdor and W. Smajdor~\cite{SmaSma96}, E.~Behrends and K.~Nikodem~\cite{BerNik95}, M.~Balaj and K.~Nikodem~\cite{BalNik00} and the present author (\cf~\cite{Was95}). 
\par\medskip
We denote by $\nonempty(X)$ the family of all non-empty subsets of a~set~$X$. Now, if $X,Y$ are (real) vector spaces and $D\subset X$ is a~convex set, then the multifunction $F\colon D\to\nonempty(Y)$ is said to be \emph{convex}, if
\begin{equation}\label{eq:conv}
 tF(x)+(1-t)F(y)\subset F\bigl(tx+(1-t)y\bigr)
\end{equation}
for any $x,y\in D$ and $t\in[0,1]$. When the reversed inclusion is stipulated, $F$ is then called \emph{concave}. Of course, the notation $A+B$ and $tA$ is meant in the Minkowski sense, \emph{i.e.}, $A+B=\{a+b\colon a\in A,\;b\in B\}$ and $tA=\{ta\colon a\in A\}$ for any $t\in\R$. Observe that a single-valued function $f\colon D\to Y$ is convex (as a~multifunction, \emph{i.e.}, $f(x)$ is identified with a~singleton $\{f(x)\}$) if and only if $f$ is \emph{affine}, which means that
\[
 tf(x)+(1-t)f(y)=f\bigl(tx+(1-t)y\bigr)\quad \big(x,y\in D,\;t\in[0,1]\big).
\]
It is easy to see that a~multifunction $F$ is convex if and only if its \emph{graph}
\[
 \Gr F=\{(x,y):x\in D,\;y\in F(x)\}
\]
is a~convex subset of $X\times Y$. Moreover, if $F$ is convex, then $F(x)$ is a~convex subset of~$Y$ for any $x\in D$. Indeed, if $y_1,y_2\in F(x)$ and $t\in[0,1]$, then by~\eqref{eq:conv} we get
\[
 ty_1+(1-t)y_2\in tF(x)+(1-t)F(x)\subset F(x).
\]
The condition
\begin{equation}\label{eq:intersection}
 \Bigl(tF(x)+(1-t)F(y)\Bigr)\cap F\bigl(tx+(1-t)y\bigr)\ne\varnothing
\end{equation}
seems to be the weakest one to guarantee the existence of an affine selection for the~multifunction~$F$. Indeed, if $F(x)=\{f(x)\}$, where $f\colon X\to Y$ is affine, the above intersection is a~singleton $\bigl\{f\bigl(tx+(1-t)y\bigr)\bigr\}$.
\par\medskip
It is proved in \cite[Theorem~1]{Was95} that the multifunction~$F$ mapping a~real interval~$\I$ into the family of all compact intervals in~$\R$, admits an affine selection if and only if the condition~\eqref{eq:intersection} is satisfied. In particular, if either $F$ is convex or concave, then~$F$ admits an affine selection.
\par\medskip
One could ask whether a~convex multifunction defined on more general domain admits an affine selection. There is a~number of results going in this direction. One of the versions of the classical Hahn--Banach Separation Theorem guarantees the existence of the linear functional separating two convex subsets of a~topological vector space. It could be easily utilised to prove the existence of a~linear (and hence afine) selection of the certain convex multifunction. Since the problem of extending functions is strongly related to the problem of the existence of selections of multifunctions, we notice that Pełczyński in his PhD dissertation~\cite{Pel68} dealt with linear versions of the classical Tietze--Urysohn theorem (and extended further the classical Borsuk--Dugundji theorem).
\par\medskip
It is worth mentioning that Edwards~\cite{Edw65} proved in 1965 the following separation theorem:
\begin{thm}\label{th:Edwards}
 Let $X$ be a Choquet simplex, $f\colon X\to[-\infty,\infty)$ a~convex upper semicontinuous function and let $g\colon X\to(-\infty,\infty]$ be a~concave lower semicontinuous function such that $f\xle g$ on $X$. Then there exists a~continuous affine function $a\colon X\to\R$ \st\ $f\xle a\xle g$ on~$X$.
\end{thm}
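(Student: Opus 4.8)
I would realise the affine function $a$ as an envelope of $f$. Regard $X$ as a compact convex subset of a locally convex space, write $\mathcal A(X)$ for the space of continuous real affine functions on $X$, and for $x\in X$ let $P_x(X)$ be the set of Radon probability measures on $X$ with barycentre $x$. Since $X$ is compact and $f$ is upper semicontinuous, $f$ is bounded above; dually, $g$ is bounded below. Put $\hat f=\inf\{b\in\mathcal A(X):b\ge f\}$; being an infimum of continuous affine functions, $\hat f$ is a concave, upper semicontinuous function with $\hat f\ge f$, and by a standard fact of Choquet theory one has the representation $\hat f(x)=\sup\bigl\{\int_X f\,d\mu:\mu\in P_x(X)\bigr\}$.

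The first real point is that $\hat f\le g$. Indeed, fix $x$ and $\mu\in P_x(X)$; from $f\le g$ we get $\int_X f\,d\mu\le\int_X g\,d\mu$, and from the concavity of $g$, Jensen's inequality gives $\int_X g\,d\mu\le g(x)$, so $\int_X f\,d\mu\le g(x)$; taking the supremum over $\mu$ yields $\hat f\le g$. Hence $f\le\hat f\le g$ with $\hat f$ concave and upper semicontinuous. Now the hypothesis that $X$ is a Choquet simplex is used: by the Choquet--Meyer theory of simplices, the upper envelope of a convex, upper semicontinuous, bounded-above function on a simplex is \emph{affine}, so $\hat f$ is affine. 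Thus $a_0:=\hat f$ is an affine function with $f\le a_0\le g$. (Symmetrically, $\check g(x)=\inf\{\int_X g\,d\mu:\mu\in P_x(X)\}$ is affine and lower semicontinuous, and repeating the argument with $\check g$ in place of $g$ gives $a_0\le\check g$; so in fact $f\le a_0\le\check g\le g$, with $a_0$ affine upper semicontinuous and $\check g$ affine lower semicontinuous.)

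It remains to pass from this affine but merely semicontinuous sandwich to a \emph{continuous} affine selection, and this is the step I expect to be the real obstacle, since a bounded affine function on a simplex need not be continuous. The plan is an iteration producing a uniformly convergent series: one builds recursively $a_1,a_2,\dots\in\mathcal A(X)$ with $\sum_n\|a_n\|_\infty<\infty$ such that the partial sums $s_n=a_1+\dots+a_n$ satisfy $f-\varepsilon_n\le s_n\le g+\varepsilon_n$ with $\varepsilon_n\downarrow0$; then $a=\sum_n a_n\in\mathcal A(X)$ obeys $f\le a\le g$. Each recursion step starts from data sandwiched within $\varepsilon$ and must extract a \emph{continuous} affine function absorbing a fixed proportion of the gap; here the simplex hypothesis is used a second time, in the form of the Riesz interpolation property of the ordered space $\mathcal A(X)$ (equivalently, $\mathcal A(X)^{*}$ being a vector lattice) --- a property which characterises simplices --- to perform the required separation by continuous affine functions. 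The other point demanding care throughout is the extended-real, semicontinuous nature of $f$ and $g$: one truncates to make them bounded and, at each stage, approximates them uniformly from above and below by finite maxima and minima of continuous affine functions so that the interpolation property applies. By contrast, the envelope construction of the first two paragraphs is soft.
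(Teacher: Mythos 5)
A preliminary remark: the paper does not prove this statement at all --- it is Edwards' separation theorem, quoted from \cite{Edw65} as background for the selection results --- so there is no in-paper argument to compare yours with, and I can only judge the proposal as a sketch of the classical proof. Its first half is the standard route and is essentially sound: $\hat f=\inf\{b\in\mathcal A(X):b\ge f\}$ is concave and upper semicontinuous, the envelope formula $\hat f(x)=\sup\bigl\{\int_X f\,d\mu:\mu\in P_x(X)\bigr\}$ together with Jensen's inequality for the concave lower semicontinuous $g$ gives $f\le\hat f\le g$, and the simplex hypothesis enters exactly where you place it, through the Choquet--Meyer theorem. One caveat: Choquet--Meyer is usually stated for \emph{continuous} convex functions, so the affineness of $\hat f$ for a merely upper semicontinuous convex $f$ requires an extra (standard, but not free) step --- for instance writing $f$ as the pointwise limit of a decreasing net of continuous convex functions, pushing the envelope formula through the limit by compactness of $P_x(X)$, and using that a decreasing pointwise limit of affine functions is affine. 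The same remark applies to the dual envelope $\check g$.

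The second half, as written, contains a genuine flaw: you propose to approximate $f$ and $g$ \emph{uniformly} by finite maxima and minima of continuous affine functions, but a uniform limit of continuous functions is continuous, so no such approximation of merely semicontinuous data can exist. The step can be repaired, and in the repaired form the simplex is not needed a second time: after the first half you hold an affine upper semicontinuous $\hat f$ and an affine lower semicontinuous $\check g$ with $\hat f\le\check g$, i.e.\ an upper semicontinuous concave function below a lower semicontinuous convex one, and for such a pair a continuous affine function in between exists on \emph{every} compact convex set --- this is the ``easy'' separation theorem, dual in arrangement to Edwards'. It is proved by truncating, strictly separating the compact convex hypograph of $\hat f$ from the closed convex epigraph of $\check g$ by a closed non-vertical hyperplane in the case $\hat f\le\check g-\varepsilon$, and then removing the $\varepsilon$ by an iteration producing a uniformly convergent series of continuous affine corrections --- the very device you describe, but run against the fixed pair $\hat f,\check g$ rather than against $f$ and $g$ via a uniform approximation that cannot exist. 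With that modification (and with the Riesz-interpolation detour dropped, since it is not needed once both envelopes are affine), your outline becomes the standard proof of Edwards' theorem.
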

This result, read in the context of multifunctions, states that the lower semicontinuous convex set-valued function defined on a~Choquet simplex, whose values are compact intervals, admits a~continuous affine selection. This multivalued version of Theorem~\ref{th:Edwards} was extended in 1968 by Lazar (\emph{cf.}~\cite[Theorem~3.1]{Laz68}) to more general codomains. 
\begin{thm}\label{th:Lazar}
 Let $\varphi\colon X\to 2^E$ be a~lower semicontinuous affine mapping from a~Choquet simplex $X$ to a Fr\'echet space $E$ that takes non-empty closed values. Then there exists a~continuous affine mapping $h\colon X\to E$ such that $h(x)\in\varphi(x)$ for each $x\in X$.
\end{thm}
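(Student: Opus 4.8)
The plan is to obtain first a \emph{continuous} selection and then to \emph{affinize} it, using the simplex structure of $X$ and Edwards' separation theorem (Theorem~\ref{th:Edwards}) as the scalar engine. Two preliminary reductions streamline the argument. Since $\varphi$ is affine it is in particular convex, so each $\varphi(x)$ is a non-empty \emph{closed convex} subset of $E$; as $X$ is compact it is paracompact, and $E$ is Fréchet, so Michael's selection principle already yields a continuous (not necessarily affine) selection $s\colon X\to E$ of $\varphi$. Moreover, writing a Fréchet space as a projective limit $E=\varprojlim E_k$ of Banach spaces $E_k$ (completions of $E/\ker p_k$ for an increasing sequence of seminorms $p_k$), one may — modulo a compatibility argument over $k$ — assume $E$ to be a Banach space, or else simply run a single approximation scheme against the complete translation-invariant metric of $E$.

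The heart of the matter is to replace the continuous selection $s$ by an affine one. The natural first attempt exploits the defining property of a Choquet simplex: every $x\in X$ admits a \emph{unique} maximal probability measure $\mu_x$ with barycentre $x$, and uniqueness forces $x\mapsto\mu_x$ to be affine (if $x=tx_1+(1-t)x_2$ then $t\mu_{x_1}+(1-t)\mu_{x_2}$ is again maximal and represents $x$, hence equals $\mu_x$). One then sets
\[
 h(x):=\int_X s\,d\mu_x,
\]
the $E$-valued barycentre of the image measure $s_*\mu_x$; linearity of the integral together with affineness of $x\mapsto\mu_x$ makes $h$ affine. To see that $h$ is a selection one needs the ``integral Jensen'' inclusion $\int_X\varphi\,d\mu\subseteq\varphi(\operatorname{bar}\mu)$ for lower semicontinuous convex multifunctions with closed convex values on a simplex: approximate $\mu_x$ weak-$*$ by finitely supported probability measures, iterate the finite convexity inclusion~\eqref{eq:conv}, and pass to the limit using lower semicontinuity of $\varphi$ and closedness of its values. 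Since $s(e)\in\varphi(e)$ pointwise, this gives $h(x)\in\varphi(x)$ for all $x$.

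The main obstacle is that this barycentric construction is too naive in general: on a non-metrizable Choquet simplex the map $x\mapsto\mu_x$ need not be weak-$*$ continuous, so $h$ need not be continuous, and the integral Jensen inclusion itself must be handled with care. A robust way around this is to avoid the measures altogether and let Theorem~\ref{th:Edwards} do the work directly: for a continuous linear functional $\xi$ on $E$ the envelopes $x\mapsto\inf\xi(\varphi(x))$ and $x\mapsto\sup\xi(\varphi(x))$ are affine, respectively upper and lower semicontinuous (affineness of $\varphi$ gives affineness of the envelopes, and lower semicontinuity of $\varphi$ gives the required semicontinuity), so Edwards' theorem produces an affine continuous scalar function lying between them. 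One then runs a successive-approximation scheme in which $\varphi$ is replaced by the enlargements $\varphi_\varepsilon(x)=\varphi(x)+\overline{B}(0,\varepsilon)$ — crucially \emph{still affine}, since $t\,\overline{B}(0,\varepsilon)+(1-t)\overline{B}(0,\varepsilon)=\overline{B}(0,\varepsilon)$ — so that at each stage a further ``direction'' of the would-be affine selection can be pinned down while affineness is preserved, the limit landing in $\bigcap_{\varepsilon>0}\varphi_\varepsilon(x)=\varphi(x)$ by closedness of the values. I expect essentially all the difficulty to lie in this bookkeeping: the obvious device of intersecting $\varphi$ with a ball around the current approximation (the standard move in Michael-type iterations) destroys affineness, so the iteration must be arranged so that every auxiliary multifunction that appears is genuinely affine and so that the scalar separation on the simplex can be applied at each step. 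Once this is achieved, the passage to the limit and the verification that $h$ is the desired continuous affine selection are routine.
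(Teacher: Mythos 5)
A point of reference first: the paper does not prove Theorem~\ref{th:Lazar} at all; it is quoted as background from Lazar \cite[Theorem~3.1]{Laz68}, so your proposal can only be measured against Lazar's original argument, of which it reproduces the starting point (scalar separation on the simplex, i.e.\ Theorem~\ref{th:Edwards}) but not the substance. Your first, barycentric route you discard yourself, and rightly so; the route you settle on has a genuine gap precisely where you locate ``the bookkeeping'', and that bookkeeping is the theorem.

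Concretely: applying Theorem~\ref{th:Edwards} to the envelopes $x\mapsto\inf\xi\bigl(\varphi(x)\bigr)$ and $x\mapsto\sup\xi\bigl(\varphi(x)\bigr)$ is fine for each fixed $\xi\in E^{*}$ (the first is convex and upper semicontinuous, the second concave and lower semicontinuous), but it only yields, functional by functional, some continuous affine real function $a_{\xi}$ between them; nothing in your scheme makes these choices compatible, i.e.\ of the form $a_{\xi}=\xi\circ h$ for a single continuous affine $h\colon X\to E$, and assembling infinitely many such one-dimensional constraints into one $E$-valued map is the entire content of the statement, not a routine limit. The successive-approximation half has the same defect in another guise: the enlargements $\varphi_{\varepsilon}=\varphi+\overline{B}(0,\varepsilon)$ are indeed still affine, but they impose no control tying the $(n+1)$-st approximant to the $n$-th, and in a Michael-type iteration the convergence comes exactly from restricting to a tube $h_{n}(x)+U$ around the previous approximant --- which, as you yourself note, destroys affineness: already for $\varphi\equiv[0,1]$ on $X=[0,1]$, $h_{n}(x)=x$ and $U=(-\tfrac{3}{10},\tfrac{3}{10})$, the multifunction $x\mapsto\varphi(x)\cap\bigl(h_{n}(x)+U\bigr)$ satisfies \eqref{eq:conv} only with strict inclusion, so the affine machinery cannot simply be reapplied at the next step. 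Producing a continuous \emph{affine} selection (or $\varepsilon$-selection uniformly close to the previous one) of such convex, non-affine, lower semicontinuous multifunctions is where the simplex structure must really be used --- in Lazar's proof this is done through an Edwards-type separation/interpolation argument in the space of continuous affine functions on $X$ --- and it is exactly what your sketch leaves open. Minor further points: a Fr\'echet space carries no norm, so $\overline{B}(0,\varepsilon)$ must be replaced by a base of closed convex neighbourhoods of $0$ (balls of the invariant metric need not be convex); the ``modulo a compatibility argument'' reduction to the Banach case is not free; and the Michael selection $s$ you begin with is correct but never used in your final scheme.
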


In fact, Edwards proved his result in a~form of the necessary and sufficient condition for $X$ to be a Choquet simplex. This means that if a~convex set~$X$ is not a~simplex, one could find two functions $f,g$ (as considered in Theorem~\ref{th:Edwards}), which cannot be separated by the continuous affine function. Hence, in general, a~convex multifunction defined on a~convex subset of a~vector space need not to admit the affine selection. Let us have a look at the well known example due to Olsen~\cite{Ols80} (see also Nikodem~\cite[Remark~1]{Nik89}). Consider the square
\[
 D=\{(x,y)\in\R^2\colon |x|+|y|\leqslant 1\}
\] 
and the simplex $S\subset\R^3$ with vertices $(-1,0,0)$, $(1,0,0)$, $(0,-1,1)$,  $(0,1,1)$. Observe that $S$ is a~graph of a~convex multifunction $F\colon D\to\nonempty(\R)$ (whose values are compact intervals) with no affine selection. Nevertheless, locally it is possible to put a~piece of a plane into $S$. It means that $F$ admits a~local affine selection at every $x_0\in\Int D$. We develop this observation in the next section.
\par\medskip
A.~Smajdor and W.~Smajdor proved in~\cite[Theorem~6]{SmaSma96} that if $F$ is defined on a~cone with the cone-basis in a~(real) vector space and $F$ takes the non-empty, closed (and necessarily convex) values in a~(real) locally convex space, then~$F$ admits an affine selection.

\section{Convex multifunctions with local selections}
Let $X$ be a topological vector space and let $D$ be a non-empty, convex subset of $X$ with non-empty interior. Moreover, let $Y$ be a real vector space. A multifunction $F\colon D\to\nonempty(Y)$ admits a~\emph{local affine selection} at a~point $x_0\in\Int D$, if there exist an open neighourhood $U\subset D$ of $x_0$ and an affine function $f\colon X\to Y$ such that $f(x)\in F(x)$ for every $x\in U$.
\par\medskip
The following finite-dimensional version of Edwards' theorem is quite easy to obtain.
\begin{lem}\label{lem:simplex}
 Let $S$ be an $n$-dimensional simplex in $\R^n$ and $Y$ be a~(real) vector space. Any convex multifunction $F:S\to\nonempty(Y)$ admits the affine selection.
\end{lem}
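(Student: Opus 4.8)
The plan is to construct the selection explicitly from barycentric coordinates. I would start by writing the vertices of $S$ as $v_0,v_1,\dots,v_n$; since $S$ is $n$-dimensional in $\R^n$, these points are affinely independent, so every $x\in S$ has a unique representation $x=\sum_{i=0}^n\lambda_i(x)v_i$ with $\lambda_i(x)\ge 0$ and $\sum_{i=0}^n\lambda_i(x)=1$. I would then recall (or verify in one line, by composing with the affine bijection carrying the standard simplex onto $S$) that each barycentric coordinate $\lambda_i$ is the restriction to $S$ of an affine functional on $\R^n$.

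Next I would pick, for each vertex, a point $y_i\in F(v_i)$ --- which is possible because $F$ has non-empty values --- and define $f\colon\R^n\to Y$ by $f(x)=\sum_{i=0}^n\lambda_i(x)y_i$, where $\lambda_i$ now denotes the affine extension of the $i$-th barycentric coordinate. As a finite combination, with affine scalar coefficients, of the constant maps $x\mapsto y_i$, the map $f$ is affine on all of $\R^n$; it is the candidate (in fact global) affine selection.

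The final step is to check that $f(x)\in F(x)$ for every $x\in S$. For this I would use the characterization recalled in the Introduction: $F$ is convex if and only if $\Gr F$ is a convex subset of $\R^n\times Y$. Since $(v_i,y_i)\in\Gr F$ for each $i$, and a convex set contains every convex combination of finitely many of its members (an easy induction starting from the two-point inclusion \eqref{eq:conv}), I would conclude
\[
 \sum_{i=0}^{n}\lambda_i(x)\,(v_i,y_i)=\bigl(x,\,f(x)\bigr)\in\Gr F,
\]
which says precisely that $f(x)\in F(x)$.

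I do not expect a serious obstacle. The only two points that need a word of care are that the barycentric coordinate maps are genuinely affine --- this is exactly where the simplex hypothesis enters, and it is what fails for an arbitrary convex body --- and that graph-convexity propagates from two summands to $n+1$. No topology on $Y$ (nor any semicontinuity of $F$) is used, which matches the statement, since only an affine, not a continuous, selection is claimed.
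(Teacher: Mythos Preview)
Your proof is correct and follows essentially the same route as the paper: choose values $y_i\in F(v_i)$ at the vertices, take the unique affine map $f\colon\R^n\to Y$ interpolating them, and verify $f(x)\in F(x)$ for $x\in S$ via convexity of $\Gr F$. The only cosmetic difference is that you construct $f$ explicitly through the (affinely extended) barycentric coordinates, whereas the paper simply invokes the existence of the unique affine interpolant and then computes $f(x)=\sum_i\lambda_i y_i$; the resulting function and the graph-convexity verification are identical.
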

\begin{proof}
Let $a_0,\dots,a_n\in\R^n$ be the vertices of $S$ and let us choose $y_i\in F(a_i)$ ($i=0,\dots,n$). Then there exists (the unique) affine function $f\colon \R^n\to Y$ such that $f(a_i)=y_i$ ($i=0,\dots,n$). Take $x\in S$. Expressing $x$ as a~convex combination of $a_0,\dots,a_n$ (with coefficients $\lambda_0,\dots,\lambda_n \geqslant 0$, $\lambda_0+\ldots+\lambda_n = 1$) we get
 \[
  f(x)=f\left(\sum_{i=0}^n\lambda_ia_i\right)=\sum_{i=0}^n\lambda_if(a_i)=\sum_{i=0}^n\lambda_iy_i.
 \]
 Since $(a_i,y_i)\in \Gr F$, by convexity of this graph we arrive at
 \[
  \bigl(x,f(x)\bigr)=\left(\sum_{i=0}^n \lambda_ia_i,\sum_{i=0}^n \lambda_iy_i\right)=\sum_{i=0}^n\lambda_i(a_i,y_i)\in\Gr F,
 \]
 whence $f(x)\in F(x)$, $x\in X$ and the proof is complete.
\end{proof}
The above lemma allows us to prove the existence of local affine selections in the finite-dimensional case.
\begin{thm}
 Let $D\subset\R^n$ be a~convex set with a~non-empty interior and $Y$ be a~(real) vector space. Any convex multifunction $F\colon D\to\nonempty(Y)$ admits a~local affine selection at every interior point of~$D$.
\end{thm}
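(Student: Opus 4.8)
The plan is to localise the problem to a full-dimensional simplex and then apply Lemma~\ref{lem:simplex}. Fix an interior point $x_0\in\Int D$. Let $e_1,\dots,e_n$ be the standard basis of $\R^n$ and, for $\varepsilon>0$, put $a_0=x_0-\varepsilon(e_1+\dots+e_n)$ and $a_i=x_0+\varepsilon e_i$ for $i=1,\dots,n$. Since $x_0\in\Int D$, for $\varepsilon$ small enough all the points $a_0,\dots,a_n$ belong to $D$. For every $\varepsilon>0$ these $n+1$ points are affinely independent, so they span an $n$-dimensional simplex $S$, and $S\subset D$ by convexity of $D$. A direct computation gives $\frac{1}{n+1}\sum_{i=0}^n a_i=x_0$; that is, $x_0$ is the centroid of $S$, and in particular $x_0\in\Int S$.

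Then I would restrict $F$ to $S$. The restriction $F|_S\colon S\to\nonempty(Y)$ is again a convex multifunction, since $S$ is convex and so condition~\eqref{eq:conv} for points of $S$ refers only to the values of $F$ on $S$. By Lemma~\ref{lem:simplex} there is an affine function $f\colon\R^n\to Y$ with $f(x)\in F(x)$ for all $x\in S$.

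Finally, take $U=\Int S$, which is an open neighbourhood of $x_0$ contained in $D$. Then $f(x)\in F(x)$ for every $x\in U$, so $f$ is a local affine selection of $F$ at $x_0$; as $x_0$ was an arbitrary interior point, the proof is complete. The only step requiring any thought is the elementary geometric observation that an interior point of $D$ always lies in the interior of a full-dimensional simplex $S\subset D$, and the explicit choice of the vertices $a_i$ above takes care of it, so I do not anticipate a genuine obstacle.
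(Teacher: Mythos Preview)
Your proof is correct and follows essentially the same route as the paper: find a full-dimensional simplex $S\subset D$ with $x_0\in\Int S$, apply Lemma~\ref{lem:simplex} to $F|_S$, and take $U=\Int S$. The only difference is cosmetic: the paper simply asserts the existence of such a simplex, whereas you supply an explicit construction with the vertices $a_0,\dots,a_n$ (and your centroid and affine-independence checks are both correct).
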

\begin{proof}
 Let $x_0\in\Int D$. Then $x_0$ is the interior point of some $n$-dimensional simplex $S\subset D$. By Lemma~\ref{lem:simplex} there exists the affine function $f:\R^n\to Y$ such that $f(x)\in F(x)$ for any $x\in S$. In particular, $f(x)\in F(x)$ for any $x\in U=\Int S$ and $f$ is a~desired local affine selection of~$F$.
\end{proof}

\section{Convex multifunctions without local selections}
In the infinite-dimensional case the problem of the existence of local affine selection looks completely different. Namely, there are convex multifunctions with no local affine selection. The following observations are due to Tomasz Kania (Warwick) who has kindly permitted us to include them here.\medskip

Let $X$ be a closed linear subspace of a Banach space $Y$. In the light of the Hahn--Banach theorem, the multifunction $F\colon X^*\to 2^{Y^*}$ given by
\begin{equation}\label{ro}F(f) = \{g\in Y^*\colon g|_X = f\text{ and }\|g\|=\|f\|\}\qquad (f\in X^*)\end{equation}
assumes always non-void values. Certainly, $F(f)$ is convex and weak*-compact for each $f\in X^*$. It is easy to prove that $F$ is a~convex multifunction.

\begin{prop}\label{pieszlasu}Suppose that $X$ is a closed linear subspace of a Banach space $Y$ such that $X^*$ does not embed isometrically into $Y^*$. Then $F$, as defined by \eqref{ro}, admits no local affine selection.\end{prop}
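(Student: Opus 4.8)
The plan is to argue by contradiction: assuming $F$ admits a local affine selection, I will manufacture a \emph{linear} isometric embedding of $X^{*}$ into $Y^{*}$, which contradicts the hypothesis. The embedding will turn out to be nothing but the linear part of the selection.

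First I would record the elementary fact that $F$ is homogeneous, $F(\lambda f)=\lambda F(f)$ for every $f\in X^{*}$ and $\lambda\in\R$ (so in particular $F(0)=\{0\}$), which is immediate from~\eqref{ro}. Next, suppose $T\colon X^{*}\to Y^{*}$ is an affine selection of $F$ on an open neighbourhood of some $f_{0}\in X^{*}$; shrinking it, I may assume it is a ball $B(f_{0},\delta)$ with $\delta>0$. Writing $T$ in the normal form $T=L+b$ with $L\colon X^{*}\to Y^{*}$ linear and $b=T(0)\in Y^{*}$, I would then use homogeneity to spread the selection around. For each $\lambda>0$ the map $f\mapsto\lambda T(f/\lambda)=L(f)+\lambda b$ is a selection of $F$ on $\lambda B(f_{0},\delta)=B(\lambda f_{0},\lambda\delta)$; and since $-T(-f)\in -F(-f)=F(f)$, the map $f\mapsto -T(-f)=L(f)-b$ is a selection of $F$ on $B(-f_{0},\delta)$, hence $f\mapsto L(f)-\lambda b$ is a selection on $B(-\lambda f_{0},\lambda\delta)$ for every $\lambda>0$. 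The point is that both families share the \emph{same} linear part $L$, only the offset $\pm\lambda b$ varying.

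The heart of the argument is to show that $\|L(g)\|_{Y^{*}}=\|g\|_{X^{*}}$ for every $g\in X^{*}$ ($g=0$ being trivial, fix $g\neq0$). I would choose $\lambda>\|g\|/\delta$ and put $h:=\lambda f_{0}+g$, $k:=-\lambda f_{0}+g$; then $\|h-\lambda f_{0}\|=\|k+\lambda f_{0}\|=\|g\|<\lambda\delta$, so $h\in B(\lambda f_{0},\lambda\delta)$ and $k\in B(-\lambda f_{0},\lambda\delta)$, whence $L(h)+\lambda b\in F(h)$ and $L(k)-\lambda b\in F(k)$ by the previous step. Since $\tfrac12 h+\tfrac12 k=g$, convexity of $F$ gives
\[
 \tfrac12\bigl(L(h)+\lambda b\bigr)+\tfrac12\bigl(L(k)-\lambda b\bigr)\in\tfrac12 F(h)+\tfrac12 F(k)\subset F(g),
\]
and the left-hand side collapses — the offsets cancelling — to $\tfrac12 L(h)+\tfrac12 L(k)=L\bigl(\tfrac12 h+\tfrac12 k\bigr)=L(g)$. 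Hence $L(g)\in F(g)$, so $\|L(g)\|_{Y^{*}}=\|g\|_{X^{*}}$ (and incidentally $L(g)|_{X}=g$). As $g$ was arbitrary, $L$ is a linear isometry of $X^{*}$ into $Y^{*}$, contradicting the assumption; therefore $F$ has no local affine selection.

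I expect the main obstacle to be the offset $b$. When the neighbourhood on which the selection lives avoids $0$ — the generic case, since one has no control over the location of $f_{0}$ — there is no reason for $b$ to vanish, and the naive candidate $L$ is only seen, directly, to satisfy $\|L(f)\|\ge\|f\|$ (because $L(f)|_{X}=f$). Overcoming this requires exactly the device above: combining the homogeneous family built from $T$ with the one built from $-T(-\,\cdot\,)$ at the antipodal pair of centres $\pm\lambda f_{0}$, so that the $b$-terms cancel and only $L(g)$ survives. The only quantitative point — choosing $\lambda$ large enough ($\lambda>\|g\|/\delta$) that $\lambda f_{0}\pm g$ land in the right balls, uniformly over all $g$ — is routine.
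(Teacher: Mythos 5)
Your overall strategy --- extract the linear part $L$ of the putative selection and show it is a linear isometric embedding of $X^*$ into $Y^*$ --- is the same as the paper's, and your homogeneity bookkeeping ($F(\lambda f)=\lambda F(f)$, transport of the selection to the balls $B(\pm\lambda f_0,\lambda\delta)$ with common linear part $L$) is correct. The argument breaks, however, at the decisive midpoint step, where you invoke convexity of $F$ to get $\tfrac12 F(h)+\tfrac12 F(k)\subset F(g)$. Despite the paper's passing remark, the multifunction \eqref{ro} is \emph{not} convex in general: if $g_1\in F(f_1)$ and $g_2\in F(f_2)$, then $tg_1+(1-t)g_2$ does restrict to $tf_1+(1-t)f_2$, but one only gets $\|tf_1+(1-t)f_2\|\le\|tg_1+(1-t)g_2\|\le t\|f_1\|+(1-t)\|f_2\|$, and the right-hand side can strictly exceed the left. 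Concretely, take $Y=\R^2$ with the $\ell_1$-norm and $X$ the first coordinate axis: the functionals $(1,1)$ and $(-1,1)$ in $Y^*=\ell_\infty^2$ are norm-preserving extensions of $\pm1\in X^*\cong\R$, yet their midpoint $(0,1)$ is not in $F(0)=\{0\}$. In your application this is exactly the point at issue: from $L(h)+\lambda b\in F(h)$ and $L(k)-\lambda b\in F(k)$ you may legitimately conclude $L(g)|_X=g$ (hence $\|L(g)\|\ge\|g\|$) and $\|L(g)\|\le\tfrac12\bigl(\|h\|+\|k\|\bigr)$, which is of order $\lambda\|f_0\|$; the inequality you actually need, $\|L(g)\|\le\|g\|$, does not follow.

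The gap is not presentational, because the intermediate statement your argument would establish --- that the linear part of \emph{any} local affine selection of $F$, at any base point, is automatically an isometric embedding --- is false. With $X\subset Y=\ell_1^2$ as above, $T(s)=(s,2s-2)$ is an affine selection of $F$ on a neighbourhood of $f_0=1$ (since $|2s-2|\le s$ for $2/3<s<2$), yet its linear part $s\mapsto(s,2s)$ has norm $2$. Note how the paper's proof sidesteps all of this: the neighbourhood is taken around the origin, where $T(0)\in F(0)=\{0\}$ kills the offset $b$, so the selection is linear, and local norm-preservation plus scaling (homogeneity of the norm, not convexity of $F$) gives a global isometric embedding. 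Your extra ambition --- ruling out selections at arbitrary $f_0\ne0$, which the paper's written proof does not actually address --- is precisely what the convexity device was meant to buy; since $F$ is not convex, that case requires a genuinely different idea, and as it stands your proof establishes the conclusion only when the selection lives near the origin.
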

\begin{proof}Assume contrapositively that there exists an open neighbourhood $U$ of the origin such that $F$ admits an affine selection $\varphi$ say, when restricted to $U$. In particular, $\|\varphi(g)\|=\|g\|$ for all $g\in U$; thus $\varphi$ is isometric. Denote by $T$ the affine map $E^*\to F^*$ that extends $\varphi$. As $T0=0$, $T$ is a~linear, isometric embedding of $X^*$ into $Y^*$.\end{proof}

\begin{rem}The hypotheses of Proposition~\ref{pieszlasu} are easily met when $Y=C[0,1]$. Indeed, by the Banach--Mazur theorem, $C[0,1]$ contains isometric copies of all separable Banach spaces. The dual space of $C[0,1]$ is isometric to $L_1(\mu)$ for some measure $\mu$ and this, in turn, prevents many Banach spaces to embed into it (\emph{cf.}~\cite[proof of Proposition 4.3.8]{AlbKal06}).\medskip 

For instance take $X=\ell_1$. Then $X^*\cong\ell_\infty$ which contains isometrically all separable Banach spaces. In this case it is plain that any isometric copy of $\ell_1$ inside of $Y=C[0,1]$ meets the hypotheses of Proposition~\ref{pieszlasu}. \end{rem}

\begin{rem}When the hypotheses of Proposition~\ref{pieszlasu} are met, the mutlifunction is not lower semicontinuous. Indeed, otherwise by Lazar's theorem (Theorem~\ref{th:Lazar} in this note) it would have admitted an~affine selection. \end{rem}

Let $\beta \mathbb N$ denote the \v{C}ech--Stone compactification of the discrete space of natural numbers. Set
\begin{equation}\label{urysohn}F(f) = \{g\in C(\beta \mathbb N)\colon g|_{\beta \mathbb N\setminus \mathbb{N}} = f\text{ and } \|g\|=\|f\|\}\quad (f\in C(\beta \mathbb{N}\setminus \mathbb{N})).\end{equation}
Then by the Tietze--Urysohn theorem, $F(f)$ is non-empty for every $f\in C(\beta \mathbb{N}\setminus \mathbb{N})$. (It is also closed and convex.) Certainly the multifunction~$F$ is convex.
\begin{prop}The multifunction $F$ given by \eqref{urysohn} does not admit a local affine selection. \end{prop}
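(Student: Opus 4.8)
The plan is to mimic the proof of Proposition~\ref{pieszlasu}: a~local affine selection of $F$ would produce a~\emph{bounded linear section} of the restriction map, and the existence of such a section is ruled out by the classical theorem of Phillips that $c_0$ is uncomplemented in $\ell_\infty$.

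Write $q\colon C(\beta\mathbb N)\to C(\beta\mathbb N\setminus\mathbb N)$ for the restriction map. First I would record the standard facts. Under the Gelfand identification $C(\beta\mathbb N)\cong\ell_\infty$ (recall that $\beta\mathbb N$ is the maximal ideal space of $\ell_\infty$), the map $q$ becomes the canonical quotient map $\ell_\infty\to\ell_\infty/c_0$: a~bounded sequence extends to a~continuous function vanishing on $\beta\mathbb N\setminus\mathbb N$ precisely when it tends to $0$, so $\ker q\cong c_0$; and, as already used to see that $F$ is well defined, the norm-preserving form of the Tietze--Urysohn theorem shows that $q$ maps the closed unit ball of $C(\beta\mathbb N)$ onto that of $C(\beta\mathbb N\setminus\mathbb N)$.

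Now assume, towards a~contradiction, that $F$ admits a~local affine selection at some point $x_0$, that is, an affine map $\varphi$ defined on an open neighbourhood $U$ of $x_0$ with $\varphi(f)\in F(f)$ for every $f\in U$. An affine map defined on a~neighbourhood of a~point extends uniquely to an affine map on the whole space (rescale along the segments emanating from $x_0$); denote this extension by $\widetilde\varphi$ and write $\widetilde\varphi(x)=Lx+c$ with $L$ linear and $c\in C(\beta\mathbb N)$ fixed. Since $\|\varphi(f)\|=\|f\|$ for $f\in U$, the map $\widetilde\varphi$ is bounded on the neighbourhood $U$, hence $L$ is a~bounded linear operator. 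Moreover $q\bigl(\varphi(f)\bigr)=\varphi(f)|_{\beta\mathbb N\setminus\mathbb N}=f$ for $f\in U$, so the affine maps $q\circ\widetilde\varphi$ and the identity agree on $U$, and therefore everywhere; evaluating at $0$ yields $q(c)=0$, that is, $c\in\ker q$, and consequently $q\circ L=\mathrm{id}$. Thus $L$ is a~bounded linear right inverse of $q$, and $\mathrm{id}-L\circ q$ is a~bounded projection of $C(\beta\mathbb N)\cong\ell_\infty$ onto $\ker q\cong c_0$ --- contradicting Phillips' theorem.

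The globalisation of $\varphi$ and the identification $\ker q\cong c_0$ are routine bookkeeping; what carries the whole argument --- and is its only genuinely non-elementary ingredient --- is the Phillips non-complementation theorem. Note that, in contrast with Proposition~\ref{pieszlasu}, the isometry of the selection plays no role here: boundedness of $L$ alone already produces the contradiction.
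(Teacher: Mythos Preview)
Your argument is correct and, in fact, slightly more careful than the paper's in that you work at an arbitrary base point~$x_0$ rather than tacitly assuming $0\in U$. The overall scaffolding---extend the local affine selection to a global affine map and read off a bounded linear operator---is the same as in the paper, but the decisive obstruction you invoke is different. The paper, following the pattern of Proposition~\ref{pieszlasu}, observes that the resulting operator gives a linear (indeed isometric) embedding of $C(\beta\mathbb N\setminus\mathbb N)\cong\ell_\infty/c_0$ into $C(\beta\mathbb N)\cong\ell_\infty$, and then rules this out by Bourgain's theorem that $\ell_\infty/c_0$ admits no equivalent strictly convex norm, whereas $\ell_\infty$ does and this property passes to subspaces. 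You instead note that the operator is a bounded right inverse of the restriction map, which yields a bounded projection of $\ell_\infty$ onto $c_0$, contradicting Phillips' theorem. Your route is more elementary (Phillips' 1940 result versus Bourgain's 1980 one) and, as you point out, does not use the isometric nature of the selection at all---boundedness of the section already suffices; the paper's route, on the other hand, yields the somewhat stronger conclusion that $C(\beta\mathbb N\setminus\mathbb N)$ does not even embed isomorphically into $C(\beta\mathbb N)$.
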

\begin{proof}Arguing as in the proof of Proposition~\ref{pieszlasu}, we would get a linear embedding of $C(\beta \mathbb{N}\setminus \mathbb{N})$ into $C(\beta \mathbb N)$, however it is known that no such operator exists. (For example, the former space does not have a strictly convex renorming but the latter does; the possibility of finding a strictly convex renorming passes to subspaces, \emph{cf.}~\cite{Bou80}; here we use the fact that $C(\beta \mathbb N\setminus \mathbb N)$ is isometric to $\ell_\infty / c_0$.) \end{proof}

\end{document}